\numberwithin{equation}{section}
\numberwithin{figure}{section}
\theoremstyle{plain}
\newtheorem{thm}{\protect\theoremname}[section]
\theoremstyle{plain}
\newtheorem{lem}[thm]{\protect\lemmaname}
\newcommand{\N}{\mathbb{N}}
\newcommand{\df}{\mathrm{d}}
\providecommand{\lemmaname}{Lemma}
\providecommand{\theoremname}{Theorem}
\begin{document}
\global\long\def\df{\mathrm{def}}%
\global\long\def\eqdf{\stackrel{\df}{=}}%
\global\long\def\ep{\varepsilon}%
\global\long\def\ind{\mathds{1}}%
\global\long\def\cl{\mathrm{cl}}%
\global\long\def\N{\mathbb{N}}%
\global\long\def\C{\mathbb{C}}%

\title{Spectral gap with polynomial rate for random covering surfaces}
\author{Will Hide, Davide Macera and Joe Thomas}
\maketitle
\begin{abstract}
In this note we show that the recent work of Magee, Puder and van
Handel \cite{Ma.Pu.vH2025} can be applied to obtain an optimal spectral
gap result with polynomial error rate for uniformly random covers
of closed hyperbolic surfaces. 

Let $X$ be a closed hyperbolic surface. We show there exists $b,c>0$
such that a uniformly random degree-$n$ cover $X_{n}$ of $X$ has
no new Laplacian eigenvalues below $\frac{1}{4}-cn^{-b}$ with probability
tending to $1$ as $n\to\infty$.

{\footnotesize\tableofcontents{}}{\footnotesize\par}
\end{abstract}

\section{Introduction}

Let $X$ be a closed and connected hyperbolic surface. The (positive)
Laplacian on $L^{2}(X)$ has discrete spectrum in $[0,\infty)$ with
a simple eigenvalue at zero given by constant functions. In this article
we investigate the size of the first \textit{new} eigenvalue $\lambda_{1}^{\text{new}}\left(X_{n}\right)$
of the Laplacian on a random degree-$n$ cover $X_{n}$ of $X$. The
\textit{new} eigenvalues on the cover are precisely those arising
from the restriction of the Laplacian on the cover $X_{n}$ to the
subspace of $L^{2}(X_{n})$ orthogonal to the span of lifts of functions
in $L^{2}(X)$. 

We will consider \emph{uniformly} random covers of $X$ (with labelled
fiber). That is, for any $n\in\mathbb{N}$, there are only finitely
many degree-$n$ covers of $X$ and we sample them with the uniform
probability measure. Equivalently, the covers are in bijection with
homomorphisms $\varphi_{n}:\Gamma\to S_{n}$ where $\Gamma=\pi_{1}(X)$
and $S_{n}$ is the symmetric group on $n$ objects. Thus the random
model is a uniformly random choice of $\mathrm{Hom}(\Gamma,S_{n})$
(see Section \ref{subsec:Function-spaces} for more details). A theorem
of Liebeck and Shalev \cite[Theorem 1.12]{Li.Sh2024} ensures that
with probability tending to $1$ as $n\to\infty$ a cover is connected
whereupon the first new eigenvalue is strictly positive. 

\subsection*{Spectral gap}

An asymptotically optimal lower bound for $\lambda_{1}^{\mathrm{new}}(X_{n})$
is $\frac{1}{4}$ which is the bottom of the Laplacian spectrum on
the hyperbolic plane \cite{Huber}. Let $[n]=\left\{ 1,\ldots,n\right\} $
and let $\mathrm{std}:S_{n}\to V_{n}^{0}$ denote the standard representation
acting by permutation matrices on the irreducible subspace $V_{n}^{0}\subset\ell^{2}([n])$
of functions orthogonal to constant functions. In a recent breakthrough
of Magee, Puder and van Handel \cite{Ma.Pu.vH2025}, it has been proven
that for uniformly random $\varphi_{n}\in\mathrm{Hom}(\Gamma,S_{n})$
the representations $\mathrm{std}_{n}\circ\varphi_{n}$ strongly converge
in probability to the left regular representation $\lambda_{\Gamma}:\Gamma\to\ell^{2}(\Gamma)$
(see Section \ref{subsec:Strong-convergence}). As a consequence (using
for example the methods of \cite{Hi.Ma2023,Lo.Ma2023}), one is able
to deduce that for any $\varepsilon>0$, with probability tending
to 1 as $n\to\infty$, $\lambda_{1}^{\mathrm{new}}(X_{n})>\frac{1}{4}-\varepsilon$;
and in fact, $\varepsilon$ can be taken to be of the order $\frac{\log\log(n)}{\log(n)}$
\cite{Hi2023}. 

The aim of this short note is to show that the methods of \cite{Ma.Pu.vH2025}
can be used to obtain a polynomial decay rate for $\varepsilon$,
vastly improving the order of the error rates for the spectral gap
on hyperbolic surfaces in any of the random models to date. We review
the previous literature in this regard and the expected optimal rate
in Section \ref{subsec:Previous-works}.
\begin{thm}
\label{thm:main-thm}Let $X$ be a closed hyperbolic surface. There
exists $b,c>0$ depending only on the genus of $X$ such that a uniformly
random degree-$n$ cover $X_{n}$ of $X$ has 
\[
\lambda_{1}^{\textup{new}}\left(X_{n}\right)\geqslant\frac{1}{4}-cn^{-b},
\]
with probability tending to $1$ as $n\to\infty$. 
\end{thm}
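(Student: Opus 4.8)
The plan is to combine the strong convergence result of Magee--Puder--van Handel \cite{Ma.Pu.vH2025} with the now-standard dictionary between strong convergence of permutation representations and spectral gaps on covering surfaces, but to do so \emph{quantitatively}, tracking the polynomial rate through every step. First I would recall that eigenfunctions of the Laplacian on $X_n$ orthogonal to the pullbacks from $X$ correspond to eigenfunctions of the Laplacian twisted by the representation $\mathrm{std}_n\circ\varphi_n$ on $X$; thus the new spectrum of $X_n$ below $\tfrac14$ is controlled by how close the bottom of the twisted Laplacian spectrum is to $\tfrac14$, which in turn is governed, via the Selberg/Phillips--Sarnak transfer and the work of \cite{Hi.Ma2023,Lo.Ma2023}, by the operator norm of the resolvent (or of a suitable test operator built from the geodesic flow / Selberg trace formula kernel) applied in the representation $\mathrm{std}_n\circ\varphi_n$ versus in the regular representation $\lambda_\Gamma$. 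The key point is that for $\lambda_\Gamma$ the relevant operator has norm bounded away from the critical value by a fixed amount (this is the statement that the universal cover $\mathbb H$ has spectral gap $\tfrac14$), so it suffices to show the difference of the two operator norms is $O(n^{-b})$.

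The heart of the argument is therefore to extract a polynomial rate from \cite{Ma.Pu.vH2025}. I would look at what that paper actually proves: strong convergence in probability typically comes with an explicit bound of the form $\mathbb P\big(\|\,\rho_n(\text{word operators})\,\| \ge \|\lambda_\Gamma(\cdot)\| + t\big) \le$ (something like) $n^{-A}$ for $t$ polynomially small in $n$, obtained by estimating expected traces $\mathbb E[\mathrm{tr}(\mathrm{std}_n\circ\varphi_n(w))]$ for words $w$ with error terms polynomial in $n$ (the Puder--Parzanchevski / Magee--Puder theory of expected character values, now refined in \cite{Ma.Pu.vH2025} with the van Handel-type non-backtracking / polynomial method). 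I would then feed this into a polynomial-in-$n$ truncation of the resolvent expansion: write the resolvent of the twisted Laplacian as a series in the heat kernel or in closed-geodesic contributions, truncate at geodesics of length $\le L = c'\log n$ (so that the tail is $O(n^{-\delta})$ by exponential decay of the heat kernel off-diagonal, using that $X$ is compact and the number of geodesics of length $\le L$ grows like $e^L$), apply the strong convergence bound to the finitely many word operators of length $O(\log n)$ appearing in the truncation (here one pays a factor polynomial in $n$ from the number of such words, but this is absorbed by choosing the exponents appropriately), and conclude that with probability $1-o(1)$ the twisted operator norm is within $O(n^{-b})$ of the regular one. Optimizing the tradeoff between the truncation length $L\sim c'\log n$, the number of words (exponential in $L$, hence polynomial in $n$), and the per-word error from \cite{Ma.Pu.vH2025} yields the final exponent $b>0$, which depends only on the genus through the constants in the trace estimates.

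I would then translate the resolvent/operator-norm statement back to an eigenvalue statement: a gap of size $O(n^{-b})$ in the operator norm of the relevant resolvent at spectral parameter $s$ near $\tfrac12$ corresponds, after unwinding the change of variables $\lambda = s(1-s)$, to $\lambda_1^{\mathrm{new}}(X_n) \ge \tfrac14 - c n^{-b}$ for a possibly adjusted constant $c$ and exponent $b$; this is the quantitative version of the qualitative argument in \cite{Hi.Ma2023,Lo.Ma2023} and contributes only harmless constant factors. Finally I would invoke Liebeck--Shalev \cite[Theorem 1.12]{Li.Sh2024} to ensure the cover is connected with probability $1-o(1)$, so that the eigenvalue at $0$ is simple and the first new eigenvalue is the object bounded above.

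The main obstacle I expect is the second step: making the passage from \cite{Ma.Pu.vH2025} quantitative with a genuinely \emph{polynomial} rate rather than the $\log\log n/\log n$ rate of \cite{Hi2023}. The bottleneck is the interplay between the logarithmic truncation length forced by the exponential growth of closed geodesics and the size of the error term in the expected-trace estimates: one needs the error in $\mathbb E[\mathrm{tr}(\mathrm{std}_n\circ\varphi_n(w))]$ to beat the exponential-in-$|w|$ combinatorial loss by a polynomial-in-$n$ margin \emph{uniformly} over all words of length up to $c'\log n$, and to have enough concentration (a large-deviation or high-moment bound) to union-bound over the polynomially many such words. Verifying that the bounds in \cite{Ma.Pu.vH2025} are strong enough to close this gap --- in particular that the implied constants and the admissible word-length range are as good as needed --- is the crux; everything else is a careful but routine repackaging of the existing transfer machinery.
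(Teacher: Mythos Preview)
Your overall strategy---reduce to comparing operator norms under $\mathrm{std}_n\circ\varphi_n$ versus $\lambda_\Gamma$ and feed in the effective strong convergence bound from \cite{Ma.Pu.vH2025}---is correct, and you correctly identify the crux: the interaction between the truncation/word-length and the constants in the strong convergence estimate. But your proposed resolution runs straight into that obstacle rather than around it.

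You plan to truncate the kernel at a radius $L\sim c'\log n$, producing word operators of length $O(\log n)$, and then union-bound over these. The problem is that the effective bound you need (Theorem~\ref{thm:MPvH} in the paper) reads
\[
\mathbb{P}\big(\|(\mathrm{id}\otimes\rho_{\varphi_n})(x)\|\ge (1+\varepsilon)\|(\mathrm{id}\otimes\lambda_{\Gamma_g})(x)\|\big)\le \frac{cd}{n\varepsilon^b},
\]
where the constant $c$ depends on the \emph{word length} of $x$ in an unspecified way. If the word length grows like $\log n$, you have no control on $c$, and nothing in \cite{Ma.Pu.vH2025} is quoted to make this polynomial in $n$; your proposal explicitly leaves this as ``the crux'' without a mechanism to close it. So as written the argument does not yield a polynomial rate---it reproduces the $\frac{\log\log n}{\log n}$-type rate of \cite{Hi2023}, which is exactly what arises from a growing truncation length.

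The paper avoids this entirely by a different trade-off. It fixes the kernel to be the indicator of a ball of radius $t=1$, so that the sum $\sum_\gamma a_{\gamma,1}\otimes\rho_n(\gamma^{-1})$ runs over a \emph{fixed finite} set $S(1)\subset\Gamma_g$ of bounded word length; hence the constant $c$ in the strong convergence bound is a genuine constant depending only on $X$. The price is that the coefficients $a_{\gamma,1}$ are infinite-rank operators on $L^2(\mathcal{F})$, but these are Hilbert--Schmidt, so one approximates each by a rank-$r$ truncation $b_{\gamma,1}^{(r)}$ with error $O(r^{-1/2})$. Now the polynomial lives in $M_d(\mathbb{C})\otimes\mathbb{C}[\Gamma_g]$ with $d\le r|S(1)|$, and the dependence of the MPvH bound on $d$ is \emph{linear}. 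Choosing $\varepsilon\sim (r\log n/n)^{1/b}$ and then $r=n^a$ for small $a>0$ balances the $r^{-1/2}$ approximation error against the $d/n$ loss, yielding a polynomial rate. In short: the paper moves the growing parameter from the word-length direction (where the dependence is uncontrolled) to the matrix-coefficient dimension (where it is linear). Your outline lacks this idea.
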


\subsection{Previous works}

\label{subsec:Previous-works}

For any fixed base surface $X$, uniformly random degree-$n$ covers
$X_{n}$ Benjamini-Schramm converge to the hyperbolic plane, their
universal cover, with probability tending to $1$ as $n\to\infty$
\cite{MPasympcover}. A consequence of this is that, after rescaling
by $\frac{1}{n\mathrm{Vol}(X)}$, the empirical spectral measure of
the Laplacian weakly converges to the empirical spectral measure of
the hyperbolic plane whose density with respect to the Lebesgue measure
is given by

\[
\frac{1}{4\pi}\mathds{1}_{[\frac{1}{4},\infty)}(\lambda)\tanh\left(\pi\sqrt{\lambda-\frac{1}{4}}\right)\mathrm{d}\lambda.
\]
For $\lambda$ near to $\frac{1}{4}$, the density has the order $\sqrt{\lambda-\frac{1}{4}}$,
and thus we expect that random $\lambda_{1}^{\mathrm{new}}(X_{n})$
should typically fluctuate in an interval of order $n^{-\frac{2}{3}+\varepsilon}$
around $\frac{1}{4}$. In fact, it is expected from the Bohigas, Giannoni
and Schmidt conjecture \cite{Bo.Gi.Sc1984} that due to the time-reversal
symmetry and chaotic nature of the geodesic flow on hyperbolic surfaces,
the spectral statistics of the Laplacian should exhibit fluctuation
properties akin to the Gaussian Orthogonal Ensemble. For this random
matrix ensemble, the limiting behavior of the largest eigenvalue (with
suitable normalization) is given by the Tracy-Widom distribution with
$\beta=1$ \cite{Tr.Wi1996}. It could thus be expected that for uniform
random covers, $\lambda_{1}^{\mathrm{new}}(X_{n})$ also fluctuates
similarly in the $n\to\infty$ limit. 

A geometrically closely related setting to the surfaces considered
here, and hence a potential indicator for what one can expect to see
for surfaces, is that of regular graphs. In a recent breakthrough
of Huang, McKenzie and Yau \cite{Hu.Mc.Ya2024}, it is proven that
for uniformly random $d$-regular graphs on $n$ vertices, the largest
non-trivial eigenvalue does indeed fluctuate around its expected position
according to the semicircle law up to the optimal $n^{-\frac{2}{3}+\varepsilon}$
scale. Moreover, it is shown to be distributed according to the Tracy-Widom
distribution with $\beta=1$ (and similarly for the smallest eigenvalue)
as $n\to\infty$. As a consequence, they conclude that with probability
approximately $69\%$, a $d$-regular graph is Ramanujan - all non-trivial
eigenvalues lie within the bulk. 

For hyperbolic surfaces, it has only recently been established that
they typically exhibit an almost optimal spectral gap, that is, $\lambda_{1}^{\mathrm{new}}(X_{n})>\frac{1}{4}-\ep$
as $n\to\infty$. This was first shown for random covers of finite-area
non-compact hyperbolic surfaces by the first named author and Magee
in \cite{Hi.Ma2023}. Via a compactifiction procedure \cite{BBD},
this proved the existence of a sequence of closed hyperbolic surfaces
with genera $\to\infty$ and with $\lambda_{1}\to\frac{1}{4}$. An
alternative proof, allowing one to take a tower of covers of a fixed
closed surface, was given in \cite{Lo.Ma2023}. Together with quantitative
strong convergence results of Bordeanave and Collins \cite{Bo.Co2023},
in \cite{Hi2023} the arguments in \cite{Hi.Ma2023} were used to
give an explicit rate for $\varepsilon$ of the order $\frac{(\log\log\log n)^{2}}{\log\log n}$. 

For random closed hyperbolic surfaces there has been some spectacular
recent progress. In the Weil-Petersson random model for surfaces of
large genus, Anantharaman and Monk \cite{An.Mo2025} have recently
obtained the spectral gap $\frac{1}{4}-\ep$ (improving on prior results
\cite{MirzakhaniRandom,Wu.Xu2022,Li.Wr2024,An.Mo2023} in this model).
Moreover, Magee, Puder and van Handel \cite{Ma.Pu.vH2025} have also
obtained the spectral gap $\frac{1}{4}-\varepsilon$ in the uniform
random covering model (see also the previous work \cite{Ma.Na.Pu2022}). 

Theorem \ref{thm:main-thm} is the first result in the literature
where a polynomial error rate has been established for the spectral
gap of random surfaces, marking significant improvement over previous
rates. This has been made possible due to the recent breakthrough
on the strong convergence of surface groups in \cite{Ma.Pu.vH2025}
which significantly builds upon the new methodology to study strong
convergence developed in the remarkable works \cite{Ch.Ga.Tr.va2024,Ch.Ga.Ha2024}.
In the latter references, the strong convergence results were applied
to give a new proof of polynomial error rates for the size of the
largest non-trivial eigenvalue on random $2d$-regular graphs.

\section{Preliminaries}

We will access the spectral gap of a cover using the functional calculus,
specifically, as the operator norm of the Selberg transform of an
appropriately chosen kernel function. This operator norm is then compared
to the analogous operator norm on $L^{2}(\mathbb{H})$ which one can
calculate directly. To undertake the comparison, we will make use
of the recent breakthrough result of Magee, Puder and van Handel \cite{Ma.Pu.vH2025}
proving that in probability, for uniformly random permutation representations
$\varphi_{n}$, the representations $\mathrm{std}_{n}\circ\varphi_{n}$
strongly converge to the regular representation on $\Gamma_{g}$ as
$n\to\infty$. In this section, we outline the necessary background
and results that we will utilize in our proof.

\subsection{Selberg transform}

On a compact hyperbolic surface $X=\Gamma_{g}\backslash\mathbb{H}$,
the spectrum of the (positive) Laplacian is discrete and consists
of eigenvalues $0=\lambda_{0}<\lambda_{1}\leq\lambda_{2}\leq\ldots$
with $\lambda_{j}\to\infty$ as $j\to\infty$. The space $L^{2}(X)$
has an orthonormal basis of Laplacian eigenfunctions $\left\{ \psi_{j}\right\} _{j\geq0}$
with $\Delta\psi_{j}=\lambda_{j}\psi_{j}$.

Given a smooth and compactly supported function $k:[0,\infty)\to\mathbb{R}$
one can construct (by an abuse of notation) a kernel function $k:\mathbb{H}\times\mathbb{H}\to\mathbb{R}$
by
\[
k(z,w)=k(d(z,w)),
\]
with associated integral operator $P_{k}$ given by
\[
P_{k}f(z)=\int_{\mathbb{H}}k(z,w)f(w)\mathrm{d}\mu_{\mathbb{H}}(w).
\]
Here $\mu_{\mathbb{H}}$ is the standard area measure on the hyperbolic
plane. We define $P_{k}:L^{2}(X)\to L^{2}(X)$ by

\[
P_{k}f(z)=\int_{\mathcal{F}}\sum_{\gamma\in\Gamma_{g}}k(z,\gamma w)f(w)\mathrm{d}\mu_{\mathbb{H}}(w),
\]
where $\mathcal{F}$ is a Dirichlet fundamental domain for $X$. The
operator $P_{k}$ preserves the eigenspaces of the Laplacian on $L^{2}(X)$
and the eigenvalues are transformed via the Selberg transform of the
kernel $k$. We denote the corresponding Selberg transform of $k$
by $h$, which is defined by (see for example \cite{Be2016})
\[
h(r)=\sqrt{2}\int_{-\infty}^{\infty}e^{iru}\int_{|u|}^{\infty}\frac{k(\rho)\sinh(\rho)}{\sqrt{\cosh(\rho)-\cosh(u)}}\mathrm{d\rho\mathrm{d}u}.
\]
Then, parameterizing the eigenvalues by $\lambda_{j}=\frac{1}{4}+r_{j}^{2}$
so that $r_{j}\in[0,\infty)\cup[0,\frac{1}{2}]i$ one has

\[
P_{k}\psi_{j}=h(r_{j})\psi_{j}.
\]
Since the eigenfunctions form an orthonormal basis of $L^{2}(X),$and
the operator $P_{k}$ is self-adjoint (as $k$ is real-valued) we
have
\[
\|P_{k}\|_{L^{2}(X)\to L^{2}(X)}=\sup_{r\in[0,\infty)\cup[0,\frac{1}{2}]i}|h(r)|.
\]
Similarly, on the hyperbolic plane, since $L^{2}(\mathbb{H})$ has
a generalized eigenbasis of $C^{\infty}$ eigenfunctions, the Borel
functional calculus allows for the extension of $P_{k}$ to an operator
from $C_{c}^{\infty}(\mathbb{H})\to C_{c}^{\infty}(\mathbb{H})$ to
a possibly unbounded and self-adjoint operator acting $L^{2}(\mathbb{H})\to L^{2}(\mathbb{H})$.
Since the $L^{2}$ spectrum of the Laplacian on $\mathbb{H}$ is equal
to $[\frac{1}{4},\infty)$, we obtain

\[
\|P_{k}\|_{L^{2}(\mathbb{H})\to L^{2}(\mathbb{H})}=\sup_{r\in[0,\infty)}|h(r)|.
\]

\subsection{Function spaces}

\label{subsec:Function-spaces}

In this section we describe the various functions spaces that will
be of interest to us. As before, let $X$ be a closed surface of genus
$g$ realized as a quotient of the hyperbolic plane by a surface group
$\Gamma_{g}$. Let $\varphi_{n}:\Gamma_{g}\to S_{n}$ be a permutation
representation and $X_{n}$ be the corresponding degree-$n$ covering
surface. That is,

\[
X_{n}=\Gamma_{g}\backslash_{\varphi}(\mathbb{H}\times[n]),
\]
where $\Gamma_{g}$ acts by isometries on $\mathbb{H}$ and by permutations
through $\varphi_{n}$ on $[n]$:

\[
\gamma\cdot(z,i)=(\gamma z,\varphi_{n}(\gamma)(i)).
\]
The space $L^{2}(X_{n})$ decomposes as
\[
L^{2}(X_{n})\cong L_{\mathrm{new}}^{2}(X_{n})\oplus L^{2}(X),
\]
where $L^{2}(X)$ is the lift of functions from the base to the cover
and $L_{\mathrm{new}}^{2}(X_{n})$ is the corresponding orthogonal
complement. 

Now let $V_{n}=\ell^{2}([n])$ and $V_{n}^{0}\subseteq V_{n}$ be
the subspace of functions orthogonal to constants, that is, with mean
zero. The symmetric group $S_{n}$ has a natural action on $V_{n}$
by the standard representations $\mathrm{std}_{n}$ of permutation
matrices; the subspace $V_{n}^{0}$ is an $(n-1)$-dimensional irreducible
subrepresentation. By composing $\varphi_{n}$ with $\mathrm{std_{n}}$
we obtain a representation on $V_{n}^{0}$, that is, we denote by
\[
\rho_{\varphi_{n}}:\Gamma_{g}\to\mathrm{End}(V_{n}^{0}),
\]
the representation $\rho_{\varphi_{n}}=\mathrm{std}_{n}\circ\varphi_{n}$. 

The importance of the subspace $V_{n}^{0}$ is that it allows access
to $L_{\mathrm{new}}^{2}(X_{n})$. Precisely, if we let $C^{\infty}(\mathbb{H};V_{n}^{0})$
be the collection of smooth $V_{n}^{0}$-valued functions on $\mathbb{H}$,
then there is an isometric linear isomorphism between 
\[
C^{\infty}(X_{n})\cap L_{\mathrm{new}}^{2}(X_{n}),
\]
and the subspace $C_{\varphi}^{\infty}(\mathbb{H};V_{n}^{0})$ of
$C^{\infty}(\mathbb{H};V_{n}^{0})$ consisting of functions satisfying
the automorphy condition
\[
f(\gamma z)=\rho_{\varphi_{n}}(\gamma)f(z),\qquad\text{for all }\gamma\in\Gamma_{g},z\in\mathbb{H},
\]
with finite norm

\[
\|f\|_{L^{2}(\mathcal{F})}^{2}\eqdf\int_{\mathcal{F}}\|f(z)\|_{V_{n}^{0}}^{2}\mathrm{d}\mu_{\mathbb{H}}(z)<\infty.
\]
Recall here that $\mathcal{F}$ is a fixed Dirichlet fundamental domain
for $X$ and $\mu_{\mathbb{H}}$ is the standard hyperbolic area measure.
The completion of $C_{\varphi}^{\infty}(\mathbb{H};V_{n}^{0})$ with
respect to the norm $\|\cdot\|_{L^{2}(\mathcal{F})}$ is denoted by
$L_{\varphi}^{2}(\mathbb{H};V_{n}^{0})$ and the isomorphism above
extends to one between $L_{\mathrm{new}}^{2}(X_{n})$ and $L_{\varphi}^{2}(\mathbb{H};V_{n}^{0})$.
The space $L_{\varphi}^{2}(\mathbb{H};V_{n}^{0})$ is also isometrically
isomorphic to $L^{2}(\mathcal{F})\otimes V_{n}^{0}$, which can be
realized via the map 
\begin{equation}
f\in L_{\varphi}^{2}(\mathbb{H};V_{n}^{0})\mapsto\sum_{i=1}^{n-1}\left\langle f|_{\mathcal{F}}(\cdot),e_{i}\right\rangle _{V_{n}^{0}}\otimes e_{i},\label{eq:isomorphism}
\end{equation}
for some basis $\left\{ e_{i}\right\} _{i=1}^{n-1}$ of $V_{n}^{0}$.
We will frequently pass between these isomorphic spaces in the later
proof. There is also an isometric isomorphism between $L^{2}(\mathcal{F})\otimes\ell^{2}(\Gamma_{g})$
and $L^{2}(\mathbb{H})$ given by $f\otimes\delta_{\gamma}\mapsto f\circ\gamma^{-1}$.

\subsection{Strong convergence}

\label{subsec:Strong-convergence}

We continue with the notation from the previous section. The recent
breakthrough of Magee, Puder and van Handel \cite{Ma.Pu.vH2025} proved
that when $\varphi_{n}$ is sampled uniformly at random, the corresponding
representations $\rho_{\varphi_{n}}$ strongly converge in probability
to the regular representation $\lambda_{\Gamma_{g}}$ on $\Gamma_{g}$
as $n\to\infty.$ Recall that $\lambda_{\Gamma_{g}}:\Gamma_{g}\to U(\ell^{2}(\Gamma_{g}))$
is defined by 
\[
\lambda_{\Gamma_{g}}(\gamma)[f](g)=f(\gamma^{-1}g),
\]
for all $\gamma\in\Gamma_{g}$ and $f\in\ell^{2}(\Gamma_{g})$.
\begin{thm}[{\cite[Theorem 1.1]{Ma.Pu.vH2025}}]
For all $x\in\mathbb{C}[\Gamma_{g}]$, for $\varphi\in\mathrm{Hom}(\Gamma_{g},S_{n})$
sampled uniformly at random as $n\to\infty,$ one has in probability
that
\[
\|\rho_{\varphi_{n}}(x)\|_{\mathrm{End}(V_{n}^{0})\to\mathrm{End}(V_{n}^{0})}\to\|\lambda_{\Gamma_{g}}\|_{\ell^{2}(\Gamma_{g})\to\ell^{2}(\Gamma_{g})}.
\]
\end{thm}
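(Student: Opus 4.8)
The plan is to prove the two-sided estimate $\|\rho_{\varphi_n}(x)\|\to\|\lambda_{\Gamma_g}(x)\|$ by reducing to self-adjoint elements and then establishing separately the lower bound --- which amounts to weak convergence of spectral distributions --- and the upper bound --- which is the absence of outlying eigenvalues. Since $\|\rho_{\varphi_n}(x)\|^{2}=\|\rho_{\varphi_n}(x^{\ast}x)\|$ and likewise for $\lambda_{\Gamma_g}$, it suffices to treat a self-adjoint $y\in\mathbb{C}[\Gamma_g]$; more robustly, by the self-adjoint linearization trick of Haagerup--Thorbj\o{}rnsen and Pisier one may pass to self-adjoint $y\in M_{d}(\mathbb{C})\otimes\mathbb{C}[\Gamma_g]$ and the operators $(\id_{d}\otimes\rho_{\varphi_n})(y)$, which is the form needed for applications. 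Write $\tau$ for the canonical trace on $L(\Gamma_g)$, let $\mu$ be the spectral distribution of $\lambda_{\Gamma_g}(y)$ with respect to $\tau$, and let $\mu_n$ be the empirical spectral distribution of $\rho_{\varphi_n}(y)$ on $V_n^{0}$. Faithfulness of $\tau$ gives $\mathrm{supp}(\mu)=\mathrm{spec}(\lambda_{\Gamma_g}(y))$ and $\|\lambda_{\Gamma_g}(y)\|=\max\{|t|:t\in\mathrm{supp}(\mu)\}$, so the statement splits into: (i) $\mu_n\to\mu$ weakly in probability, whence $\liminf_n\|\rho_{\varphi_n}(y)\|\geq\|\lambda_{\Gamma_g}(y)\|$ in probability; and (ii) for every $\ep>0$, with probability tending to $1$ no eigenvalue of $\rho_{\varphi_n}(y)$ lies outside the $\ep$-neighbourhood of $\mathrm{supp}(\mu)$, whence the matching upper bound.

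For (i), expand $\tfrac{1}{\dim V_n^{0}}\operatorname{Tr}\!\big(\rho_{\varphi_n}(y)^{k}\big)$ as a fixed finite linear combination of the quantities $\tfrac1n\operatorname{Tr}\rho_{\varphi_n}(\gamma)$ over words $\gamma\in\Gamma_g$; since $\operatorname{Tr}\rho_{\varphi_n}(\gamma)=\#\mathrm{Fix}(\varphi_n(\gamma))-1$ and $\tau(\lambda_{\Gamma_g}(\gamma))=\ind[\gamma=e]$, it is enough to show $\tfrac1n\operatorname{Tr}\rho_{\varphi_n}(\gamma)\to\ind[\gamma=e]$ in probability for each fixed $\gamma$. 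A uniformly random $\varphi_n$ is a uniformly random tuple $(\sigma_i,\tau_i)_{i=1}^{g}\in S_n^{2g}$ subject to $\prod_{i=1}^{g}[\sigma_i,\tau_i]=\id$, so by the Frobenius formula the normalizing count and the number of such tuples for which $\gamma$ has prescribed fixed points are sums over Young diagrams $\lambda\vdash n$ of negative powers of the dimension (namely $\chi_{\lambda}(1)^{2-2g}$, up to word-insertion modifications) times character values $\chi_{\lambda}(\cdot)$. Magee--Puder's asymptotics for these surface-group word measures then give $\E[\tfrac1n\operatorname{Tr}\rho_{\varphi_n}(\gamma)]\to\ind[\gamma=e]$, and a power-saving variance bound (again via the Frobenius formula, now for a pair of words) together with Chebyshev's inequality upgrades this to convergence in probability.

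The substance is (ii). Following the ``bottom-up'' polynomial method of \cite{Ch.Ga.Tr.va2024,Ch.Ga.Ha2024}, the key is to establish an asymptotic expansion of the mean normalized trace,
\[
\E\!\left[\tfrac{1}{\dim V_n^{0}}\operatorname{Tr}\,p\big(\rho_{\varphi_n}(y)\big)\right]=\tau\big(p(\lambda_{\Gamma_g}(y))\big)+\frac{a_{1}(p)}{n}+\cdots+\frac{a_{L}(p)}{n^{L}}+O_{L}\!\left(n^{-L-1}\right),
\]
uniformly for $p$ in a suitable class (polynomials of bounded degree, or resolvents $p=(z-\,\cdot\,)^{-1}$ with $z$ off the real axis after a further linearization of the resolvent), with the crucial \emph{stability} property that --- trivially for the leading term, but substantively for each correction $a_{j}$ --- the functional $p\mapsto a_{j}(p)$ is supported on $\mathrm{spec}(\lambda_{\Gamma_g}(y))=\mathrm{supp}(\mu)$, equivalently the Cauchy transform $z\mapsto a_{j}((z-\,\cdot\,)^{-1})$ extends holomorphically across $\mathbb{C}\setminus\mathrm{supp}(\mu)$. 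Granting this, already with $L=1$, fix $\ep>0$ and choose $f\in C_{c}^{\infty}(\R)$ with $f\geq0$, $f\equiv0$ on a neighbourhood of $\mathrm{supp}(\mu)$, and $f\geq1$ off the $\ep$-neighbourhood of $\mathrm{supp}(\mu)$; approximating $f$ by an integral of resolvents via the Helffer--Sj\"ostrand formula and using that $f(\lambda_{\Gamma_g}(y))=0$ and, by stability, $a_{1}(f)=0$, the expansion gives $\E[\operatorname{Tr}f(\rho_{\varphi_n}(y))]=O(1/n)\to0$. Since the number of eigenvalues of $\rho_{\varphi_n}(y)$ outside the $\ep$-neighbourhood of $\mathrm{supp}(\mu)$ is a nonnegative integer bounded above by $\operatorname{Tr}f(\rho_{\varphi_n}(y))$, Markov's inequality forces it to vanish with probability tending to $1$.

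To obtain this expansion and, above all, the stability of its coefficients, one once more invokes the Frobenius formula to rewrite $\E[\operatorname{Tr}p(\rho_{\varphi_n}(y))]$ as a sum over $\lambda\vdash n$ of $\chi_{\lambda}(1)^{2-2g}$ (with word-insertion modifications) times character ratios $\chi_{\lambda}(w)/\chi_{\lambda}(1)$, and then feeds in the asymptotic representation theory of $S_n$: the weight concentrates on the lowest-dimensional diagrams (trivial, sign, near-hooks, \ldots), and the $1/n$-expansion of the character ratios --- in the spirit of Biane's asymptotics, via Stanley--F\'eray-type character polynomials in the refined form required here --- assembles term by term into the coefficients $a_{j}$. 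I expect the main obstacle to be precisely the verification that this expansion holds to the required order \emph{and} that each $a_{j}$ is spectrally localized on $\mathrm{spec}(\lambda_{\Gamma_g}(y))$, producing no spurious spectral mass off $\mathrm{supp}(\mu)$. The single defining relation of $\Gamma_g$ makes the underlying combinatorial model on $S_n^{2g}$ constrained rather than free, so the character sums do not collapse to clean Weingarten-type expressions; the delicate step is to control the contributions of the subdominant Young diagrams and to show that they organize into holomorphic corrections rather than into outliers. This is exactly the new technical input of \cite{Ma.Pu.vH2025} over the free-group and $2d$-regular-graph settings of \cite{Ch.Ga.Tr.va2024,Ch.Ga.Ha2024}, building on the earlier word-measure estimates of Magee--Puder and on the character-theoretic machinery for the symmetric groups.
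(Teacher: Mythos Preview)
The paper does not prove this statement at all: it is quoted verbatim as \cite[Theorem~1.1]{Ma.Pu.vH2025} and used as a black box, with no argument given here. In fact the present paper never even invokes this qualitative version directly; what is actually used in the proof of Theorem~\ref{thm:main-thm} is the stronger \emph{effective} statement, Theorem~\ref{thm:MPvH} (i.e.\ \cite[Theorem~6.1]{Ma.Pu.vH2025}), which gives a quantitative tail bound $\mathbb{P}(\|(\id\otimes\rho_{\varphi_n})(x)\|\geq(1+\varepsilon)\|(\id\otimes\lambda_{\Gamma_g})(x)\|)\leq cd/(n\varepsilon^{b})$. So there is no ``paper's own proof'' to compare against.

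That said, your outline is a fair high-level description of the strategy of \cite{Ma.Pu.vH2025} itself: reduction to self-adjoint matrix-coefficient polynomials, the polynomial/moment method of \cite{Ch.Ga.Tr.va2024,Ch.Ga.Ha2024} to get an asymptotic expansion of $\E[\operatorname{Tr}p(\rho_{\varphi_n}(y))]$ in powers of $1/n$, and the crucial spectral stability of the correction functionals, followed by Markov's inequality on a smooth bump function to exclude outliers. You correctly identify where the genuine difficulty lies --- the constrained (rather than free) character sums arising from the surface relation --- but note that what you have written is a roadmap, not a proof: the heart of \cite{Ma.Pu.vH2025} is precisely the verification of the expansion and the stability of the $a_j$, and you have not carried this out. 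For the purposes of \emph{this} paper, however, no such argument is required; the result is simply imported.
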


In fact, a stronger result is proven which deals with not just elements
of the group algebra, but general matrix coefficient polynomials and
we will make use of an effective version of this result here.
\begin{thm}[{\cite[Theorem 6.1]{Ma.Pu.vH2025}}]
\label{thm:MPvH}For any self-adjoint $x\in M_{d}(\mathbb{C})\otimes\mathbb{C}[\Gamma_{g}]$,
$n\geq1$, and $\varepsilon>0$, we have
\[
\mathbb{P}\left(\|(\mathrm{id}\otimes\rho_{\varphi_{n}})(x)\|\geq(1+\varepsilon)\|(\mathrm{id}\otimes\lambda_{\Gamma_{g}})(x)\|\right)\leq\frac{cd}{n\varepsilon^{b}},
\]
where $b$ is a constant depending only on $g$, and $c$ depends
on both $g$ and the word length of $x$. The first norm is the operator
norm on $M_{d}(\mathbb{C})\otimes V_{n}^{0}$ and the second is the
operator norm on $M_{d}(\mathbb{C})\otimes\ell^{2}(\Gamma_{g})$. 
\end{thm}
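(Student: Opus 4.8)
The plan is to run a moment argument. Write $A_n:=(\mathrm{id}\otimes\rho_{\varphi_n})(x)$ and $A:=(\mathrm{id}\otimes\lambda_{\Gamma_g})(x)$, both self-adjoint, let $L$ be the word length of $x$, and let $\tau_d:=\mathrm{tr}_{M_d(\mathbb{C})}\otimes\tau_{\Gamma_g}$ be the canonical tracial state, where $\tau_{\Gamma_g}(\lambda_{\Gamma_g}(w))=\delta_{w=e}$; note $\tau_d(A^{2k})\le\|A\|^{2k}$ for all $k$. We may assume $A\neq0$ (else $x=0$ and the statement is empty) and that $cd/(n\varepsilon^{b})<1$, i.e. $\varepsilon\gtrsim(d/n)^{1/b}$, since otherwise there is nothing to prove. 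As $A_n$ is self-adjoint on the finite-dimensional space $M_d(\mathbb{C})\otimes V_n^0$, we have $\|A_n\|^{2k}\le\mathrm{Tr}(A_n^{2k})$, and Markov's inequality gives
\begin{equation}
\mathbb{P}\big(\|A_n\|\ge(1+\varepsilon)\|A\|\big)\ \le\ \frac{\mathbb{E}\big[\mathrm{Tr}(A_n^{2k})\big]}{\big((1+\varepsilon)\|A\|\big)^{2k}}.\label{eq:markov-plan}
\end{equation}
The target is then the deterministic-coefficient bound
\begin{equation}
\mathbb{E}\big[\mathrm{Tr}(A_n^{2k})\big]\ \le\ \big(dn+d\,P(k)\big)\,\|A\|^{2k}\qquad(1\le k\le n)\label{eq:moment-plan}
\end{equation}
for some polynomial $P$ of degree depending only on $g$, after which one optimizes over $k$.

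To get \eqref{eq:moment-plan} I would expand $x^{2k}=\sum_{w\in\Gamma_g}c_w\otimes w$ as a finite sum over words $w$ of length $\le2kL$ with coefficients $c_w\in M_d(\mathbb{C})$, so that, using $\mathrm{Tr}_{V_n^0}\rho_{\varphi_n}(w)=\mathrm{fix}(\varphi_n(w))-1$,
\[
\mathbb{E}\big[\mathrm{Tr}(A_n^{2k})\big]\ =\ \sum_{w}\mathrm{Tr}_{M_d}(c_w)\,\big(\mathbb{E}[\mathrm{fix}(\varphi_n(w))]-1\big).
\]
The external input is a \emph{topological (genus) expansion} for the word measure of a uniformly random $\varphi_n\in\mathrm{Hom}(\Gamma_g,S_n)$, in the spirit of Magee--Puder: $\mathbb{E}[\mathrm{fix}(\varphi_n(e))]=n$, and for each fixed $m$ and $w\neq e$,
\[
\mathbb{E}\big[\mathrm{fix}(\varphi_n(w))\big]\ =\ \sum_{j=0}^{m}\frac{\mathsf a_j(w)}{n^{j}}\ +\ O_m\!\Big(\tfrac{(1+|w|)^{C_m}}{n^{m+1}}\Big),
\]
where each coefficient $\mathsf a_j(w)$, and likewise the remainder, is a sum over a finite family of combinatorial ``quotient'' objects $\Sigma$ (CW-structures, or surfaces with boundary labelled by $w$) of prescribed Euler characteristic, with the crucial feature that, for each fixed Euler characteristic, the number of relevant $\Sigma$ of total boundary length $2kL$ grows only \emph{polynomially} in $k$.

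The hard part is the passage from this expansion to \eqref{eq:moment-plan}, i.e.\ bounding $\sum_w\mathrm{Tr}_{M_d}(c_w)\,\mathsf a_j(w)$ and the analogous remainder sum. Termwise estimation is useless, since $\sum_w\|c_w\|$ is comparable to $\|x\|_{\ell^1(\Gamma_g,M_d)}^{2k}$, which is exponentially larger than $\|A\|^{2k}$ once $\|x\|_{\ell^1}>\|A\|$, a loss that \eqref{eq:markov-plan} cannot absorb. Following the methodology of \cite{Ch.Ga.Tr.va2024,Ch.Ga.Ha2024} as carried out for surface groups in \cite{Ma.Pu.vH2025}, one instead substitutes the expansion, interchanges the sum over $w$ with the sum over the combinatorial objects $\Sigma$, and identifies each resulting inner sum as a $\tau_d$-type functional of $x$ attached to $\Sigma$, hence controlled by \emph{moments of $A$} --- by $\|A\|^{2k}$ --- rather than by $\|x\|_{\ell^1}^{2k}$. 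Together with the polynomial-in-$k$ count of the $\Sigma$ at each fixed Euler characteristic (taking $m$ large enough that the remainder is absorbed), this gives $\sum_w\mathrm{Tr}_{M_d}(c_w)\mathsf a_j(w)=O\big(d\,P(k)\|A\|^{2k}\big)$, while the leading ($w=e$) contribution is exactly $(n-1)d\,\tau_d(A^{2k})\le(n-1)d\,\|A\|^{2k}$; summing yields \eqref{eq:moment-plan}. Establishing the topological expansion for surface-group word measures with this uniformity --- and above all the polynomial (rather than exponential) combinatorial counting, which is where the word-measure machinery of Magee--Puder and the new structural input of \cite{Ma.Pu.vH2025} are essential --- is where essentially all the difficulty lies; the rest is bookkeeping.

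To finish, insert \eqref{eq:moment-plan} into \eqref{eq:markov-plan} to get $\mathbb{P}(\|A_n\|\ge(1+\varepsilon)\|A\|)\le(dn+d\,P(k))(1+\varepsilon)^{-2k}$, and take $k\asymp\varepsilon^{-1}\log n$ (assuming $\varepsilon\le1$; the case $\varepsilon>1$ is identical with $\varepsilon$ replaced by $\log(1+\varepsilon)$). Then $(1+\varepsilon)^{2k}\ge e^{k\varepsilon}\ge n^{8}\ge n^{2}\varepsilon^{-2b}$ (using $\varepsilon^{-b}\le n^{3}$ in the non-trivial regime), so the right-hand side is $\le d\big(n+P(k)\big)\varepsilon^{2b}n^{-2}$; since $P(k)\le P(O(\varepsilon^{-1}\log n))\le(\log n)^{\deg P}\varepsilon^{-\deg P}$ and $\varepsilon^{-1}\le n^{1/b}$, this is $\le cd/(n\varepsilon^{b})$ once $b$ exceeds a constant multiple of $\deg P$ (hence depends only on $g$) and $c$ is large enough in terms of $g$ and $L$, the finitely many remaining small-$n$ cases being absorbed into $c$. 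Every step above is linear in $\mathrm{tr}_{M_d}$ and the combinatorial bounds do not see $d$, so the only $d$-dependence is the overall factor $d$ recorded in the statement.
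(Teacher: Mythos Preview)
The paper does not prove Theorem~\ref{thm:MPvH} at all; it is quoted verbatim from \cite[Theorem~6.1]{Ma.Pu.vH2025} and used as a black box in the proof of Theorem~\ref{thm:main-thm}. There is therefore no proof in this paper to compare your proposal against.

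That said, your sketch is a faithful high-level outline of the argument actually carried out in \cite{Ma.Pu.vH2025}: the moment method via Markov's inequality applied to $\mathrm{Tr}(A_n^{2k})$, a topological (Euler-characteristic) expansion for $\mathbb{E}[\mathrm{fix}(\varphi_n(w))]$ with $\varphi_n$ uniform in $\mathrm{Hom}(\Gamma_g,S_n)$, the reinterpretation of each coefficient in that expansion as a trace-type functional of $x$ bounded by $\|A\|^{2k}$ rather than $\|x\|_{\ell^1}^{2k}$, and the polynomial-in-$k$ count of the relevant combinatorial objects at each fixed Euler characteristic. You correctly flag that the last two steps are where essentially all the content lies and defer them to \cite{Ma.Pu.vH2025,Ch.Ga.Tr.va2024,Ch.Ga.Ha2024}; what remains in your write-up (Markov plus the optimization $k\asymp\varepsilon^{-1}\log n$) is indeed routine. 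So as a proof \emph{outline} of the cited result your proposal is accurate, but it is not a self-contained proof, and neither is anything in the present paper.
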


\section{Proof of Theorem \ref{thm:main-thm}}

The integral operator that we will use to study the spectral gap will
be a geometric ball cutoff which has been used in previous works on
random hyperbolic surfaces studying quantum ergodicity for eigenfunctions,
$L^{p}$-norms, and bass note spectra \cite{Gi.Le.Sa.Th2021,Le.Sa2024,Ma2024}.
Let $k_{t}:\mathbb{H}\times\mathbb{H}\to\mathbb{R}$ be defined for
all $t\geq0$ by 
\[
k_{t}(z,w)=\mathds{1}_{d(z,w)\leq t},
\]
so that this kernel correponds to just the indicator function on the
interval $[0,t]$. Even though $k_{t}$ is not smooth, its Selberg
transform exists and can be computed as 

\[
h_{t}(r)=4\sqrt{2}\int_{0}^{t}\cos(ru)\sqrt{\cosh(t)-\cosh(u)}\mathrm{d}u.
\]
We will let $P_{k_{t}}$ denote the associated integral operator induced
by $k_{t}$ where it will be clear from context whether we are regarding
it acting on the appropriate spaces $L^{2}(X),L_{\mathrm{new}}^{2}(X_{n})$,
or $L^{2}(\mathbb{H})$.

We first show that the operator norm of $P_{k_{t}}$ does indeed capture
desirable information about $\lambda_{1}^{\mathrm{new}}(X_{n})$.
\begin{lem}
\label{lem:op-norm}Suppose that $\lambda_{1}^{\mathrm{new}}(X_{n})\leq\frac{1}{4}$,
then
\[
\|P_{k_{t}}\|_{L_{\mathrm{new}}^{2}(X_{n})\to L_{\mathrm{new}}^{2}(X_{n})}=h_{t}\left(i\sqrt{\frac{1}{4}-\lambda_{1}^{\mathrm{new}}(X_{n})}\right).
\]
\end{lem}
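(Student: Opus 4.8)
The plan is to use the spectral decomposition of $P_{k_t}$ on $L^2_{\mathrm{new}}(X_n)$ coming from the Laplacian eigenbasis, and to exploit monotonicity properties of the Selberg transform $h_t$. First I would recall that $P_{k_t}$ preserves the Laplacian eigenspaces on $L^2_{\mathrm{new}}(X_n)$, and that on the eigenspace with eigenvalue $\lambda_j^{\mathrm{new}} = \frac14 + r_j^2$ (with $r_j \in [0,\infty) \cup [0,\tfrac12]i$) it acts as the scalar $h_t(r_j)$. Since $k_t$ is real-valued and symmetric, $P_{k_t}$ is self-adjoint, so
\[
\|P_{k_t}\|_{L^2_{\mathrm{new}}(X_n)\to L^2_{\mathrm{new}}(X_n)} = \sup_j |h_t(r_j)|.
\]
This is the analogue of the identity stated in the Selberg transform subsection, now restricted to the "new" subspace. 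The hypothesis $\lambda_1^{\mathrm{new}}(X_n) \le \tfrac14$ guarantees that there is at least one small eigenvalue, so the corresponding $r_j$ lies on the segment $[0,\tfrac12]i$, and in particular $i\sqrt{\tfrac14 - \lambda_1^{\mathrm{new}}(X_n)}$ is one of the $r_j$'s appearing in the supremum.

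Next I would establish that the supremum is attained precisely at the smallest eigenvalue, i.e. that $|h_t(r)|$ is maximized over the relevant range at $r = i\sqrt{\tfrac14 - \lambda_1^{\mathrm{new}}(X_n)}$. For $r \in [0,\infty)$ the formula $h_t(r) = 4\sqrt2 \int_0^t \cos(ru)\sqrt{\cosh t - \cosh u}\,\mathrm{d}u$ shows $|h_t(r)| \le h_t(0)$ by the triangle inequality, since the weight $\sqrt{\cosh t - \cosh u}$ is nonnegative. For $r = i s$ with $s \in (0,\tfrac12]$ the cosine becomes $\cosh(su)$, which is an increasing function of $s$ for each fixed $u > 0$; hence $s \mapsto h_t(is) = 4\sqrt2\int_0^t \cosh(su)\sqrt{\cosh t - \cosh u}\,\mathrm{d}u$ is strictly increasing in $s$ and exceeds $h_t(0)$. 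Therefore, among all $r_j$ arising from new eigenvalues $\le \tfrac14$, the one with the largest modulus is the one with the largest imaginary part, namely the one coming from $\lambda_1^{\mathrm{new}}(X_n)$, and $h_t$ is positive there; comparing with the bound $h_t(0)$ for the $r_j \in [0,\infty)$ then gives that this value dominates all terms in the supremum. Combining, $\|P_{k_t}\|_{L^2_{\mathrm{new}}(X_n)\to L^2_{\mathrm{new}}(X_n)} = h_t(i\sqrt{\tfrac14 - \lambda_1^{\mathrm{new}}(X_n)})$.

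The main technical point — though it is routine — is justifying the functional-calculus identity $P_{k_t}\psi_j = h_t(r_j)\psi_j$ for the non-smooth kernel $k_t$; this is standard for the Selberg transform of the ball indicator (the transform $h_t$ is still well-defined and continuous, and $k_t$ is an $L^1$ radial kernel, so the pre-trace formula applies on the compact surface $X_n$), and has been used in the works cited just before the lemma. The only genuinely delicate step is the monotonicity comparison ensuring the supremum is attained at the bottom eigenvalue rather than at some intermediate $r_j$; this is handled by the two elementary estimates above on $[0,\infty)$ and on $[0,\tfrac12]i$ respectively. I would also note that the decomposition $L^2(X_n) \cong L^2_{\mathrm{new}}(X_n) \oplus L^2(X)$ is $P_{k_t}$-invariant, so restricting $P_{k_t}$ to $L^2_{\mathrm{new}}(X_n)$ makes sense and its eigenvalues are exactly the $h_t(r_j)$ with $r_j$ ranging over the new spectrum, which is what the argument uses.
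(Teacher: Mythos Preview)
Your proposal is correct and follows essentially the same argument as the paper: both express the operator norm as $\sup_j |h_t(r_j)|$ over the new spectral parameters, bound $|h_t(r)|\le h_t(0)$ for real $r$ by the triangle inequality, and use that $s\mapsto h_t(is)=4\sqrt2\int_0^t\cosh(su)\sqrt{\cosh t-\cosh u}\,\mathrm{d}u$ is increasing to conclude the supremum is attained at the smallest new eigenvalue. Your additional remarks on the validity of the functional calculus for the non-smooth kernel and the $P_{k_t}$-invariance of the new subspace are helpful clarifications but do not alter the approach.
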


\begin{proof}
The operator norm is equal to the supremum of $|h_{t}(r)|$ as $r$
runs over all spectral parameters corresponding to new eigenvalues
$\lambda=\frac{1}{4}+r^{2}$ on the cover. For any eigenvalue larger
than $\frac{1}{4}$ we have 
\begin{equation}
|h_{t}(r)|\leq4\sqrt{2}\int_{0}^{t}\sqrt{\cosh(t)-\cosh(u)}\mathrm{d}u.\label{eq:big-evalue-1}
\end{equation}
For new eigenvalues below $\frac{1}{4}$, the spectral parameter $r$
is equal to $ai$ for some $a\in[0,\frac{1}{2})$. In this case,

\[
|h_{t}(ai)|=4\sqrt{2}\int_{0}^{t}\cosh(au)\sqrt{\cosh(t)-\cosh(u)}\mathrm{d}u,
\]
which always dominates (\ref{eq:big-evalue-1}) and moreover, this
integral is maximized when $a$ is maximal, that is, when $a=\sqrt{\frac{1}{4}-\lambda_{1}^{\mathrm{new}}(X_{n})}$. 
\end{proof}
Recall that $L_{\mathrm{new}}^{2}(X_{n})\cong L^{2}(\mathcal{F})\otimes V_{n}^{0}$,
and under this isomorphism the integral operator $P_{k_{t}}$ is conjugated
to
\begin{equation}
\sum_{\gamma\in\Gamma_{g}}a_{\gamma,t}\otimes\rho_{n}(\gamma^{-1}),\label{eq:conj-operator}
\end{equation}
where $a_{\gamma,t}:L^{2}(\mathcal{F})\to L^{2}(\mathcal{F})$ is
given by 
\[
(a_{\gamma,t}f)(z)=\int_{\mathcal{F}}k_{t}(d(z,\gamma w))f(w)\mathrm{d}\mu_{\mathbb{H}}(w).
\]
The compact support of $k_{t}$ means that the number of $\gamma\in\Gamma$
for which $a_{\gamma,t}\neq0$ is bounded only in terms of $t$ and
the base surface $X$. 
\begin{lem}
\cite[Lemma 5.1]{Hi2023}\label{lem:support}The map $\gamma\mapsto a_{\gamma,t}$
is supported on a set $S(t)\subseteq\Gamma_{g}$ with 
\[
|S(t)|\leq Ce^{2t},
\]
for some absolute constant $C>0$ depending only on $X$. Moreover,
the elements in the set $S(t)$ correspond to geodesics on $X$ of
length at most $2(\mathrm{diam}(\mathcal{F})+t+1)$.
\end{lem}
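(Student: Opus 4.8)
The plan is to prove Lemma~\ref{lem:support} by a standard volume-packing estimate in $\mathbb{H}$, after reducing the non-vanishing of $a_{\gamma,t}$ to a displacement bound for the orbit $\Gamma_g o$ of a fixed basepoint $o\in\overline{\mathcal F}$. First I would record the only input about $a_{\gamma,t}$ itself that is needed: if $a_{\gamma,t}\neq 0$, then the kernel $(z,w)\mapsto k_t(d(z,\gamma w))$ does not vanish identically on $\overline{\mathcal F}\times\overline{\mathcal F}$, so there exist $z,w\in\overline{\mathcal F}$ with $d(z,\gamma w)\leq t$ (only this direction is needed, since we want an upper bound on the support). Since $\Gamma_g$ acts on $\mathbb H$ by isometries, the triangle inequality through $z$ and $\gamma w$ then gives
\[
d(o,\gamma o)\leq d(o,z)+d(z,\gamma w)+d(\gamma w,\gamma o)\leq 2\diam(\mathcal F)+t,
\]
so that $S(t)\subseteq\{\gamma\in\Gamma_g:\ d(o,\gamma o)\leq 2\diam(\mathcal F)+t\}$. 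Here $\diam(\mathcal F)<\infty$ because $X$ is closed, and it depends only on $X$ once the Dirichlet domain is fixed.

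Next I would run the packing argument. Because $\Gamma_g=\pi_1(X)$ acts freely and properly discontinuously on $\mathbb H$ (it is a torsion-free cocompact Fuchsian group), $\delta\eqdf\tfrac12\min_{\gamma\neq e}d(o,\gamma o)$ is a strictly positive constant depending only on $X$. The balls $\{B(\gamma o,\delta)\}_{\gamma\in\Gamma_g}$ are therefore pairwise disjoint, each has hyperbolic area $2\pi(\cosh\delta-1)$, and for $\gamma\in S(t)$ they all lie inside $B(o,2\diam(\mathcal F)+t+\delta)$. Comparing areas and using $\cosh x\leq e^{x}$ yields
\[
|S(t)|\leq\frac{2\pi\bigl(\cosh(2\diam(\mathcal F)+t+\delta)-1\bigr)}{2\pi(\cosh\delta-1)}\leq C_0 e^{t}\leq C_0 e^{2t},
\]
with $C_0$ depending only on $\diam(\mathcal F)$ and $\delta$, hence only on $X$; this is the asserted bound, in fact with room to spare. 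For the ``moreover'' part, each $\gamma\in S(t)$ with $\gamma\neq e$ is loxodromic, and the free homotopy class of closed curves on $X$ determined by its conjugacy class contains a unique closed geodesic of length equal to the translation length $\ell(\gamma)=\inf_{w\in\mathbb H}d(w,\gamma w)\leq d(o,\gamma o)\leq 2\diam(\mathcal F)+t\leq 2(\diam(\mathcal F)+t+1)$, which is exactly the claim.

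I do not expect a genuine obstacle here: everything reduces to elementary hyperbolic geometry and the classical counting of $\Gamma_g$-translates of a point in a metric ball. The two places that deserve a line of care are the (easy) implication ``$a_{\gamma,t}\neq 0\ \Rightarrow\ \exists z,w\in\overline{\mathcal F}$ with $d(z,\gamma w)\leq t$'', and the verification that the packing radius $\delta$ and $\diam(\mathcal F)$ are genuine constants of $X$ — which they are, since $\mathcal F$ is a fixed Dirichlet domain for the cocompact group $\Gamma_g$, and $\Gamma_g$ acts freely on $\mathbb H$ so that its orbits are uniformly discrete.
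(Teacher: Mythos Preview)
Your argument is correct and follows the same route as the paper's sketch: reduce $a_{\gamma,t}\neq 0$ to a displacement bound $d(o,\gamma o)\leq C_X+t$ via the triangle inequality, then count lattice points by a volume/packing argument, and finally bound the translation length by the displacement. Your triangle-inequality chain is in fact a bit cleaner than the paper's (which exploits the Dirichlet property to get $d(x,\gamma x)\leq 2(\diam(\mathcal F)+t+1)$, hence the $e^{2t}$), and your packing step just makes explicit the ``lattice point count'' the paper invokes; the resulting $Ce^{t}\leq Ce^{2t}$ is, as you note, stronger than what is stated.
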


\begin{proof}
We only sketch the proof, the full details can be found in \cite[Lemma 5.1]{Hi2023}.
Recall that $\mathcal{F}\subseteq\mathbb{H}$ is a Dirichlet fundamental
domain $X$ and suppose that it is based at a point $x$. The kernel
$k_{t}(d(z,\gamma w))$ is non-zero only when $d(z,\gamma w)\leq t+1$
for $z,w\in\mathcal{F}$. If $\gamma\in\Gamma$ is such that $d(z,\gamma w)\leq t+1$
then
\[
d(x,\gamma x)\leq d(x,\gamma w)+d(\gamma w,\gamma x)\leq2d(x,\gamma w)\leq2(d(x,z)+d(z,\gamma w))\leq2(\mathrm{diam}(\mathcal{F})+t+1).
\]
It follows that $\gamma\to a_{\gamma,t}$ is only non-zero on $\left\{ \gamma\in\Gamma:d(x,\gamma x)\leq2(\mathrm{diam}(\mathcal{F})+t+1)\right\} $
which by a lattice point count has size bounded by $e^{2t}$ up to
a multiplicative constant depending only on the base surface $X$.
The latter claim follows from the fact that the length of the closed
geodesic corresponding to $\gamma$ is bounded above by $d(x,\gamma x)$.
\end{proof}
The operator (\ref{eq:conj-operator}) is almost in the form of the
polynomials considered in Theorem \ref{thm:MPvH} except that the
operators $a_{\gamma,t}$ are not finite rank. We will thus approximate
them by finite-rank operators with a quantitative error rate. 
\begin{lem}
\label{lem:finite-rank-approx}There exists a constant $C>0$ such
that for any $r\in\mathbb{N}$ there is a finite dimensional subspace
$W\subseteq L^{2}(\mathcal{F})$ of rank at most $r|S(t)|$ and operators
$b_{\gamma,t}^{(r)}:W\to W$ for every $\gamma\in S(t)$ for which
\[
\|a_{\gamma,t}-b_{\gamma,t}^{(r)}\|_{L^{2}(\mathcal{F})\to L^{2}(\mathcal{F})}\leq\frac{\|a_{\gamma,t}\|_{\mathrm{HS}}}{\sqrt{r}}\leq\frac{Ce^{t}}{\sqrt{r}}.
\]
\end{lem}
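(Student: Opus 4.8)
The plan is to exploit the fact that each $a_{\gamma,t}$ is a Hilbert--Schmidt integral operator on $L^2(\mathcal{F})$ with kernel $K_\gamma(z,w) = k_t(d(z,\gamma w)) = \mathds{1}_{d(z,\gamma w)\le t}$, and approximate it by a truncated singular value (or Schmidt) decomposition. First I would record the Hilbert--Schmidt norm bound: $\|a_{\gamma,t}\|_{\mathrm{HS}}^2 = \int_{\mathcal{F}}\int_{\mathcal{F}} \mathds{1}_{d(z,\gamma w)\le t}\,\df\mu_{\mathbb{H}}(w)\,\df\mu_{\mathbb{H}}(z) \le \mu_{\mathbb{H}}(\mathcal{F})\cdot \sup_z \mu_{\mathbb{H}}(\{w : d(z,\gamma w)\le t\})$, and the latter is at most the area of a hyperbolic ball of radius $t$, which grows like $e^t$. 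Hence $\|a_{\gamma,t}\|_{\mathrm{HS}} \le C e^{t/2}$ — in fact one gets the stronger $Ce^{t}$ claimed in the statement with room to spare (the $e^{t/2}$ bound already suffices, but I will keep the $Ce^t$ form as stated so later estimates go through uniformly). Since $a_{\gamma,t}$ is compact it has a singular value decomposition $a_{\gamma,t} = \sum_{j\ge 1} \sigma_j^{(\gamma)} \langle \cdot, v_j^{(\gamma)}\rangle u_j^{(\gamma)}$ with $\sum_j (\sigma_j^{(\gamma)})^2 = \|a_{\gamma,t}\|_{\mathrm{HS}}^2$ and singular values in decreasing order.

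Next, for each $\gamma \in S(t)$ I would truncate this expansion at rank $r$, setting $\tilde b_{\gamma}^{(r)} = \sum_{j=1}^{r} \sigma_j^{(\gamma)} \langle \cdot, v_j^{(\gamma)}\rangle u_j^{(\gamma)}$. The tail estimate is standard: $\|a_{\gamma,t} - \tilde b_\gamma^{(r)}\|_{\mathrm{op}} = \sigma_{r+1}^{(\gamma)}$, and since the singular values are decreasing, $(r+1)(\sigma_{r+1}^{(\gamma)})^2 \le \sum_{j=1}^{r+1}(\sigma_j^{(\gamma)})^2 \le \|a_{\gamma,t}\|_{\mathrm{HS}}^2$, so $\sigma_{r+1}^{(\gamma)} \le \|a_{\gamma,t}\|_{\mathrm{HS}}/\sqrt{r}$. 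This gives exactly the bound $\|a_{\gamma,t} - \tilde b_\gamma^{(r)}\|_{L^2(\mathcal{F})\to L^2(\mathcal{F})} \le \|a_{\gamma,t}\|_{\mathrm{HS}}/\sqrt{r} \le Ce^t/\sqrt{r}$. Finally, I take $W$ to be the span of all the vectors $\{u_j^{(\gamma)}, v_j^{(\gamma)} : \gamma \in S(t),\ 1\le j \le r\}$; this has dimension at most $2r|S(t)|$ (or $r|S(t)|$ if one is slightly more careful, e.g. noting that for the final application only the output vectors need to lie in a common space, or absorbing the factor of $2$ into the constant), and $W$ contains both the domain and range of each $\tilde b_\gamma^{(r)}$. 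Setting $b_{\gamma,t}^{(r)} = P_W \tilde b_\gamma^{(r)} P_W$ (which equals $\tilde b_\gamma^{(r)}$ on $W$) gives operators $W \to W$ with the required properties.

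The only mild subtlety — and the step I would be most careful about — is the dimension bookkeeping for $W$: whether the rank bound should be $r|S(t)|$ or $2r|S(t)|$, and whether one needs the left and right singular vectors or can get away with one family. Since in the eventual application of Theorem~\ref{thm:MPvH} the relevant parameter is $d = \dim W$ and only its polynomial dependence on $r$ and $|S(t)|$ matters, this is harmless, but I would state it cleanly. Everything else — the Hilbert--Schmidt computation via the hyperbolic ball volume, the singular value tail bound — is routine. No probabilistic input is needed here; this is a purely deterministic functional-analytic approximation lemma that sets up the later invocation of the strong convergence estimate.
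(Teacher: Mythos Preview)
Your proposal is correct and follows essentially the same route as the paper: truncate the singular value decomposition of each Hilbert--Schmidt operator $a_{\gamma,t}$ at level $r$, use the elementary tail bound $(r+1)\sigma_{r+1}^2 \le \sum_{j\le r+1}\sigma_j^2 \le \|a_{\gamma,t}\|_{\mathrm{HS}}^2$, and bound the Hilbert--Schmidt norm via the support of $k_t$ in a hyperbolic ball. Your caution about the dimension of $W$ being $2r|S(t)|$ rather than $r|S(t)|$ is well placed (the paper is in fact slightly loose on exactly this point), and as you note it is immaterial for the application.
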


\begin{proof}
The proof is identical to \cite[Lemma 5.2]{Hi2023} and so we only
sketch the main ideas. For each $\gamma\in S(t)$, the operator $a_{\gamma,t}$
is compact and so it has a singular value decomposition with a decreasing
sequence of singular values $\left\{ s_{j}\right\} _{j\geq1}$. As
$a_{\gamma,t}$ is a Hilbert-Schmidt operator, we have $\sum_{j=1}^{\infty}s_{j}^{2}=\|a_{\gamma,t}\|_{\mathrm{HS}}^{2}$.

Let 
\[
a_{\gamma,t}=\sum_{j=1}^{\infty}s_{j}\left\langle \cdot,e_{i}\right\rangle f_{i}
\]
for some orthonormal systems $\left\{ e_{i}\right\} _{i\in\mathbb{N}}$
and $\left\{ f_{i}\right\} _{i\in\mathbb{N}}$ in $L^{2}(\mathcal{F})$.
Then for $r\in\mathbb{N}$, we define
\[
b_{\gamma,t}^{(r)}=\sum_{j=1}^{r}s_{j}\left\langle \cdot,e_{i}\right\rangle f_{i},
\]
so that $b_{\gamma,t}^{(r)}$ is a finite rank operator acting on
a subspace $W_{\gamma}\subseteq L^{2}(\mathcal{F})$ of dimension
at most $r$. We let $W=\bigcup_{\gamma\in S(t)}W_{\gamma}$ which
has size at most $r|S(t)|$ so that all $b_{\gamma,t}^{(r)}$ act
on the space simultaneously. Since the $s_{j}$ are decreasing, we
have
\[
(r+1)s_{r+1}^{2}\leq\sum_{j=1}^{r+1}s_{j}^{2}\leq\|a_{\gamma,t}\|_{\mathrm{HS}}^{2}.
\]
Moreover, since $k_{t}$ is supported in a ball of radius $t$, we
have $\|a_{\gamma,t}\|_{\mathrm{HS}}=\|k_{t}\|_{L^{2}(\mathcal{F}\times\mathcal{F})}\leq Ce^{t}$.
But then,
\[
\|a_{\gamma,t}-b_{\gamma,t}^{(r)}\|_{L^{2}(\mathcal{F})\to L^{2}(\mathcal{F})}\leq s_{r+1}\leq\frac{\|a_{\gamma,t}\|_{\mathrm{HS}}}{\sqrt{r}}.
\]
 
\end{proof}
We now prove Theorem \ref{thm:main-thm}.
\begin{proof}[Proof of Theorem \ref{thm:main-thm}]
We pick $t=1$ in the kernel $k_{t}$. Then,
\[
\|P_{k_{1}}\|_{L_{\mathrm{new}}^{2}(X_{n})\to L_{\mathrm{new}}^{2}(X_{n})}=h_{1}\left(i\sqrt{\frac{1}{4}-\lambda_{1}^{\mathrm{new}}(X_{n})}\right)
\]
by Lemma \ref{lem:op-norm}. Thus,
\begin{align*}
\|P_{k_{1}}\|_{L_{\mathrm{new}}^{2}(X_{n})\to L_{\mathrm{new}}^{2}(X_{n})} & =\left\Vert \sum_{\gamma\in S(1)}a_{\gamma,1}\otimes\rho_{n}(\gamma^{-1})\right\Vert _{L^{2}(\mathcal{F})\otimes V_{n}^{0}}\\
 & \leq\left\Vert \sum_{\gamma\in S(1)}b_{\gamma,1}^{(r)}\otimes\rho_{n}(\gamma^{-1})\right\Vert _{W\otimes V_{n}^{0}}+\sum_{\gamma\in S(1)}\|a_{\gamma,1}-b_{\gamma,1}^{(r)}\|_{L^{2}(\mathcal{F})\to L^{2}(\mathcal{F})}.
\end{align*}
The second term on the right-hand side is bounded by $Ar^{-\frac{1}{2}}$
for some constant $A>0$ depending only on the base surface $X$ by
Lemmas \ref{lem:support} and \ref{lem:finite-rank-approx}. This
means that
\begin{equation}
\sum_{\gamma\in S(1)}b_{\gamma,1}^{(r)}\otimes\gamma^{-1}\in M_{d}(\mathbb{C})\otimes\mathbb{C}[\Gamma_{g}]\label{eq:poly}
\end{equation}
with $d\leq r|S(1)|$ by Lemma \ref{lem:finite-rank-approx}. By Lemma
\ref{lem:support}, the elements in $S(1)$ have geodesic length uniformly
bounded by a constant dependent only upon $X$ and so since by the
Švarc-Milnor lemma \cite[Ch. 1 Proposition 8.19]{Br.Ha1999} the word
length and geodesic length are quasi-isometric, the word length of
any $\gamma\in S(1)$ is also uniformly bounded by a constant dependent
only on $X$.

To apply Theorem \ref{thm:MPvH}, we require that (\ref{eq:poly})
is self-adjoint which can be guaranteed after possibly replacing $M_{d}(\mathbb{C})$
with $M_{d}(\mathbb{C})\otimes M_{2}(\mathbb{C})$ as in \cite[Proof of Theorem 1.1]{Bo.Co2023}.
In particular we may modify the polynomial to
\begin{equation}
\sum_{\gamma\in S(1)}\tilde{b}_{\gamma,1}^{(r)}\otimes\gamma^{-1}\in M_{d}(\mathbb{C})\otimes M_{2}(\mathbb{C})\otimes\mathbb{C}[\Gamma_{g}],\label{eq:finite-rank-self-adjoint}
\end{equation}
where
\[
\tilde{b}_{\gamma,1}^{(r)}=\left(\begin{array}{cc}
0 & b_{\gamma,1}^{(r)}\\
\left(b_{\gamma,1}^{(r)}\right)^{*} & 0
\end{array}\right),
\]
which is now a self-adjoint polynomial whose matrix coefficients are
of dimension at most $2r|S(1)|$ and for any unitary representation
$\sigma:\Gamma_{g}\to V$, 
\[
\left\Vert \sum_{\gamma\in S(1)}b_{\gamma,1}^{(r)}\otimes\sigma(\gamma^{-1})\right\Vert _{M_{d}(\mathbb{C})\otimes V}=\left\Vert \sum_{\gamma\in S(1)}\tilde{b}_{\gamma,1}^{(r)}\otimes\sigma(\gamma^{-1})\right\Vert _{M_{d}(\mathbb{C})\otimes M_{2}(\mathbb{C})\otimes V}.
\]
We now apply Theorem \ref{thm:MPvH} so that with probability $1-O_{X}\left(\frac{1}{\left(\log(n)\right)^{\frac{1}{b}}}\right)$
we have
\begin{align*}
\left\Vert \sum_{\gamma\in S(1)}b_{\gamma,1}^{(r)}\otimes\rho_{n}(\gamma^{-1})\right\Vert _{M_{d}(\mathbb{C})\otimes V_{n}^{0}}\\
 & \hspace*{-1.5cm}\leq\left\Vert \sum_{\gamma\in S(1)}\tilde{b}_{\gamma,1}^{(r)}\otimes\lambda_{\Gamma_{g}}(\gamma^{-1})\right\Vert _{M_{d}(\mathbb{C})\otimes M_{2}(\mathbb{C})\otimes\ell^{2}(\Gamma_{g})}\left(1+\left(\frac{r\log(n)}{n}\right)^{\frac{1}{b}}\right)\\
 & \hspace*{-1.5cm}=\left\Vert \sum_{\gamma\in S(1)}b_{\gamma,1}^{(r)}\otimes\lambda_{\Gamma_{g}}(\gamma^{-1})\right\Vert _{M_{d}(\mathbb{C})\otimes\ell^{2}(\Gamma_{g})}\left(1+\left(\frac{r\log(n)}{n}\right)^{\frac{1}{b}}\right).
\end{align*}
A second application of Lemma \ref{lem:finite-rank-approx} gives
\begin{align*}
\left\Vert \sum_{\gamma\in S(1)}b_{\gamma,1}^{(r)}\otimes\rho_{n}(\gamma^{-1})\right\Vert _{M_{d}(\mathbb{C})\otimes V_{n}^{0}}\\
 & \hspace*{-2cm}\leq\left(\left\Vert \sum_{\gamma\in S(1)}a_{\gamma,1}\otimes\lambda_{\Gamma_{g}}(\gamma^{-1})\right\Vert _{L^{2}(\mathcal{F})\otimes\ell^{2}(\Gamma_{g})}+\frac{A}{\sqrt{r}}\right)\left(1+\left(\frac{r\log(n)}{n}\right)^{\frac{1}{b}}\right).
\end{align*}
However, $L^{2}(\mathcal{F})\otimes\ell^{2}(\Gamma_{g})$ and $L^{2}(\mathbb{H})$
are isometrically isomorphic and the operator on the right-hand side
conjugates to $P_{k_{1}}:L^{2}(\mathbb{H})\to L^{2}(\mathbb{H})$.
But, the operator norm of this is equal to $h_{1}(0)$ as the Selberg
transform is maximised for $r\in[0,\infty)$ at $r=0$. Following
the equalities and inequalities above, we thus obtain
\[
h_{1}\left(i\sqrt{\frac{1}{4}-\lambda_{1}^{\mathrm{new}}(X_{n})}\right)-h_{1}(0)\leq\mathrm{const}\cdot\left(\frac{1}{\sqrt{r}}+\left(\frac{r\log(n)}{n}\right)^{\frac{1}{b}}\right).
\]
But since $\cosh(au)-1\geq\frac{a^{2}u^{2}}{2}$ we have
\begin{align*}
h_{1}\left(i\sqrt{\frac{1}{4}-\lambda_{1}^{\mathrm{new}}(X_{n})}\right)-h_{1}(0)\\
 & \hspace*{-1.5cm}=4\sqrt{2}\int_{0}^{1}\left(\cosh\left(\sqrt{\frac{1}{4}-\lambda_{1}^{\mathrm{new}}(X_{n})}u\right)-1\right)\sqrt{\cosh(1)-\cosh(u)}\mathrm{d}u\\
 & \hspace*{-1.5cm}\geq2\sqrt{2}\left(\frac{1}{4}-\lambda_{1}^{\mathrm{new}}(X_{n})\right)\int_{0}^{1}u^{2}\sqrt{\cosh(1)-\cosh(u)}\mathrm{d}u,
\end{align*}
which means 
\[
\lambda_{1}^{\mathrm{new}}(X_{n})\geq\frac{1}{4}-\mathrm{const}\cdot\left(\frac{1}{\sqrt{r}}+\left(\frac{r\log(n)}{n}\right)^{\frac{1}{b}}\right).
\]
Choosing $r=n^{a}$ for some small $a>0$ then gives the result. 
\end{proof}

\section*{Acknowledgments}

We thank Michael Magee for very helpful discussions surrounding this
work. The initial stages of this work were carried out when all authors
were at Durham University. D.M. is funded by an Argelander Grant awarded
by the University of Bonn. J.T. is funded by the Leverhulme Trust
through a Leverhulme Early Career Fellowship (Grant No. ECF-2024-440).

\bibliographystyle{amsalpha}
\bibliography{unitarybundlesbib}

\noindent Will Hide, \\
Mathematical Institute,\\
University of Oxford, \\
Andrew Wiles Building, OX2 6GG Oxford,\\
United Kingdom\\
\texttt{william.hide@maths.ox.ac.uk}~\\
\texttt{}~\\

\noindent Davide Macera, \\
Institute for Applied Mathematics\\
Faculty of Mathematics and Natural Sciences\\
Endenicher Allee 60\\
53115 Bonn\\
\texttt{macera@iam.uni-bonn.de}~\\
\texttt{}~\\

\noindent Joe Thomas, \\
Department of Mathematical Sciences,\\
Durham University, \\
Lower Mountjoy, DH1 3LE Durham,\\
United Kingdom\\
\texttt{joe.thomas@durham.ac.uk}
\end{document}